\newtheorem{theorem}{Theorem}
\newtheorem{lemma}{Lemma}
\newcommand\widebar[1]{\mathop{\overline{#1}}}
\newcommand\BIN{\textrm{BIN}}
\newcommand\E{\mathbbm{E}}
\title{Conflict-free chromatic number vs conflict-free chromatic index}
\author[1,2]{Michał Dębski}
\author[3]{Jakub Przybyło}
\affil[1]{Faculty of Informatics, Masaryk University Botanick\'a 68a, 602 00 Brno, Czech Republic}
\affil[2]{Faculty of Mathematics and Information Sciences, Warsaw University of Technology, Koszykowa 75, 00-662 Warszawa, Poland}
\affil[3]{AGH University of Science and Technology, al. A. Mickiewicza 30, 30-059 Krakow, Poland}
\begin{document}

\maketitle

\begin{abstract}
A vertex coloring of a given graph $G$ is conflict-free if the closed neighborhood of every vertex contains a unique color (i.e. a color appearing only once in the neighborhood). The minimum number of colors in such a coloring is the conflict-free chromatic number of $G$, denoted $\chi_{CF}(G)$. What is the maximum possible conflict-free chromatic number of a graph with a given maximum degree $\Delta$? Trivially, $\chi_{CF}(G)\leq \chi(G)\leq \Delta+1$, but it is far from optimal -- due to results of Glebov, Szabó and Tardos, and of Bhyravarapu, Kalyanasundaram and Mathew, the answer in known to be $\Theta\left(\ln^2\Delta\right)$.

We show that the answer to the same question in the class of line graphs is  $\Theta\left(\ln\Delta\right)$ -- that is, the extremal value of the conflict-free chromatic index among graphs with maximum degree $\Delta$ is much smaller than the one for conflict-free chromatic number.
The same result for $\chi_{CF}(G)$ is also provided
in the class of near regular graphs, i.e. graphs with minimum degree $\delta \geq \alpha \Delta$.
\end{abstract}

\section{Introduction}

We say that a coloring of vertices of a graph $G$ is \emph{conflict-free} if for every vertex $v\in V(G)$ there is a color that appears exactly once in the closed neighborhood of $v$. The minimum number of colors in such a coloring is called the \emph{conflict-free chromatic index} of $G$ and denoted by $\chi_{CF}(G)$. In the literature, this invariant is also called the closed neighborhood conflict-free chromatic number of $G$ or the conflict-free chromatic parameter of $G$ and sometimes denoted by $\kappa_{CF}(G)$.

Note that every proper vertex coloring of $G$ is conflict-free (because the color of a vertex  $v$ cannot be used on any neighbor of $v$), so $\chi_{CF}(G)\leq \chi(G)$. However, we can usually find a conflict-free coloring that uses much fewer colors. For example, odd cycles require only $2$ colors instead of $3$, and cliques require only $2$ colors instead of $n$.

This model of coloring is originally motivated by channel assignment in wireless networks. Here colors correspond to available frequencies and each node $v$ in the network wants to receive a transmission on some frequency $f_v$ -- it is possible only if $v$ is within range of exactly one transmitter that uses frequency $f_v$, as otherwise the signals would interfere. In a general setting this yields a hypergraph coloring problem that can be specialized to conflict-free coloring of graphs -- see \cite{EvenEtAl,SmorodinskyApplications,SmorodinskyPhd}.

Our work stems from the question: what is the maximum possible conflict-free chromatic number of a graph with a given maximum degree $\Delta$? In 2009 Pach and Tardos showed that the answer is of order at most $\ln^{2+\epsilon}\Delta$ and at least $\ln \Delta$ \cite{PachTardos}. Both bounds have been improved to $\Theta\left(\ln^2\Delta\right)$; Glebov, Szabó and Tardos showed that certain random graphs on $n$ vertices require at least $\Omega\left(\ln^2n\right)$ colors \cite{GlebovEtAl}, while Bhyravarapu, Kalyanasundaram and Mathew gave a randomized procedure that constructs conflict-free colorings using $O\left(\ln^2\Delta\right)$ colors \cite{Hindusi}. Constants hidden in the $\Omega,O$-notations are different, as one might expect from probabilistic proofs, but if we focus only on the order of magnitude -- the question is completely answered and the mentioned results can be thought of as a conflict-free analog of Brooks' theorem.

Random graphs used by Glebov, Szabó and Tardos have vertices of different (expected) degrees, ranging from $n^{-\alpha}$, for some constant $\alpha$ close to $1$, to $\frac{n}{\ln n}$. It turns out that such wide spread of degrees is essential; a theorem by Kostochka, Kumbhat and Łuczak implies that if a graph is $\Delta$-regular, then it admits a conflict-free coloring using only $O(\ln \Delta)$ colors \cite[Theorem 6]{KostochkaEtAl}. We prove a slightly stronger statement that the logarithmic upper bound holds if all degrees are of the same order of magnitude.

\begin{theorem}
\label{thm_regularGraphs}
Let $\alpha>0$. If $G$ is a graph with maximum degree $\Delta$ and $\delta(G)\geq \alpha\Delta$, then the conflict-free chromatic number of $G$ is at most $O\left(\ln \Delta\right)$.
\end{theorem}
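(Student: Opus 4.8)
The plan is to exhibit a conflict-free coloring by the probabilistic method, exploiting the fact that near-regularity forces every closed neighborhood to have essentially the same size $\Theta(\Delta)$, so that a single ``scale'' of colors suffices. Concretely, I would fix a small constant $\beta>0$ and a palette consisting of $m=C\ln\Delta$ \emph{active} colors together with one \emph{background} color, where $C=C(\alpha)$ is chosen later. Color the vertices independently: each vertex receives a fixed active color with probability $\beta/\Delta$ (so each active color is used with probability $\beta/\Delta$), and the background color otherwise. For a vertex $v$ let $A_v$ be the bad event that no active color appears exactly once in $N[v]$; avoiding every $A_v$ is exactly a conflict-free coloring, and the total number of colors is $m+1=O(\ln\Delta)$.

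The first routine step is the single-color computation. Writing $s=|N[v]|$, so that $\alpha\Delta\le s\le\Delta+1$, the number of vertices of $N[v]$ carrying a given active color is distributed as $\BIN(s,\beta/\Delta)$, and the probability that this color appears exactly once equals $s\cdot\frac{\beta}{\Delta}\left(1-\frac{\beta}{\Delta}\right)^{s-1}$. Since $s\beta/\Delta$ lies in the bounded interval $[\alpha\beta,\beta]$ up to lower-order terms, this probability is bounded below by a positive constant $c_0=c_0(\alpha,\beta)$; this is the only place where the near-regularity hypothesis $\delta\ge\alpha\Delta$ is used, and it is responsible for the dependence of the final constant on $\alpha$. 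Consequently the expected number $U$ of active colors that are singletons in $N[v]$ is at least $c_0 m=\Theta(\ln\Delta)$.

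The crux is to upgrade this into $\Pr[A_v]=\Pr[U=0]\le\Delta^{-3}$, and here the main obstacle is that the events ``color $c_j$ is a singleton in $N[v]$'' are \emph{not} independent, since the vertices of $N[v]$ each carry a single color. A direct concentration bound on $U$ as a function of the $s=\Theta(\Delta)$ color choices is far too weak, because the deviation we need, $\Theta(\ln\Delta)$, is tiny compared to $\sqrt{s}$. The fix I would use is to condition on the number $w$ of non-background vertices in $N[v]$: by a Chernoff bound $w$ concentrates around its mean $sm\beta/\Delta=\Theta(\ln\Delta)$, and conditioned on its value the colors of those $w$ vertices are independent and uniform on the $m$ active colors. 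Thus, given $w$, computing $U$ becomes a balls-in-bins problem with $w=\Theta(\ln\Delta)$ balls and $m$ bins, $U$ is a function of only $w$ independent variables, and changing the color of a single vertex alters $U$ by at most $2$. McDiarmid's bounded-difference inequality then gives $\Pr[U=0\mid w]\le\exp\!\left(-\Omega(w)\right)=\Delta^{-\Omega(1)}$, with the hidden exponent growing linearly in $C$; taking $C=C(\alpha)$ large enough yields $\Pr[A_v]\le\Delta^{-3}$ after absorbing the negligible probability that $w$ is atypical.

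Finally I would close with the Lov\'asz Local Lemma. The event $A_v$ depends only on the colors of $N[v]$, so $A_v$ and $A_w$ are independent whenever $N[v]\cap N[w]=\emptyset$, i.e.\ whenever the distance between $v$ and $w$ exceeds $2$; hence each $A_v$ is mutually independent of all but at most $\Delta^2$ of the other bad events. Since $e\cdot\Delta^{-3}\cdot(\Delta^2+1)<1$ for large $\Delta$, the symmetric Local Lemma guarantees a coloring avoiding every $A_v$, that is, a conflict-free coloring with $O(\ln\Delta)$ colors. For bounded $\Delta$ the claim is trivial using the $\Delta+1$ proper colors, so only the large-$\Delta$ regime needs the above argument.
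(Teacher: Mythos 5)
Your argument is correct, and the random coloring you analyze is essentially the same one the paper uses: each vertex becomes ``active'' with probability $\Theta(\ln\Delta/\Delta)$ and active vertices receive a uniformly random color from a palette of $\Theta(\ln\Delta)$ colors, everything else getting a single background color. The difference lies entirely in the analysis, and it is a genuine one. The paper proceeds in two rounds: it first \emph{fixes} the active set $V'$ by a Chernoff-plus-Local-Lemma argument guaranteeing $\Theta(\ln\Delta)\le|V'\cap N[v]|\le\Theta(\ln\Delta/\alpha)$ for every $v$, and only then colors $V'$ at random, applying Talagrand's inequality to the complementary statistic $X_v$ (the number of active vertices of $N[v]$ whose color is repeated) to show $X_v<d_v$, followed by a second application of the Local Lemma. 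You instead run a single round and achieve the same reduction --- from $\Theta(\Delta)$ relevant trials down to $\Theta(\ln\Delta)$ --- by conditioning on the number $w$ of active vertices in $N[v]$, after which the singleton count $U$ is a balls-in-bins statistic of $w=\Theta(\ln\Delta)$ independent uniform trials and McDiarmid's bounded-difference inequality already yields $\Pr(U=0)\le\Delta^{-3}$; your remark that unconditioned concentration over all $\Theta(\Delta)$ color choices would be far too weak identifies precisely the obstacle that the paper's two-stage decomposition is designed to circumvent. The trade-off is that your route needs only one application of the Local Lemma and no Talagrand, at the cost of a conditioning step (which, to be fully rigorous, should condition on the \emph{set} of active vertices in $N[v]$ rather than merely its cardinality, and should record that the conditional expectation $w(1-1/m)^{w-1}$ stays $\Omega(w)$ throughout the typical range of $w$ --- both routine). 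In both proofs the dependence on $\alpha$ enters in the same place, through the lower bound $|N[v]|\ge\alpha\Delta$ on the number of candidates for a singleton color.
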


Can we prove the same, logarithmic upper bound for graphs with arbitrary distribution of vertex degrees? Of course it is impossible in general, but there is hope for such a result in a restricted class of graphs. As our main theorem we prove a logarithmic upper bound that applies to line graphs.

\begin{theorem}
\label{thm_lineGraphs}
The conflict-free chromatic number of any line graph $G$ of maximum degree $\Delta$ is at most
$$
O\left(\ln\Delta\right).
$$
\end{theorem}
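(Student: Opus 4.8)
The plan is to translate the statement into an edge-colouring problem on the underlying graph and then run a geometric random colouring together with the Lov\'asz Local Lemma, the whole point being that the two-clique shape of neighbourhoods in a line graph lets me get away with $O(\ln\Delta)$ colours rather than the $O(\ln^2\Delta)$ that a general graph would force. First I would write $G=L(H)$ and observe that a conflict-free colouring of $G$ is exactly a colouring of $E(H)$ in which, for every edge $e=uv$, some colour occurs exactly once on $E(u)\cup E(v)$ (the closed neighbourhood of $e$ in $L(H)$). Since a vertex of degree $d:=\Delta(H)$ produces a clique $K_d$ in $L(H)$ while every edge is adjacent to at most $2d-2$ others, one has $\Delta(L(H))=\Theta(d)$ and hence $\ln\Delta=\Theta(\ln d)$; so it suffices to find such an edge-colouring with $O(\ln d)$ colours. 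I would orient every edge towards its endpoint of larger degree (breaking ties arbitrarily) and call $\max(\deg u,\deg v)$ the \emph{scale} of $e=uv$; the neighbourhood $E(u)\cup E(v)$ has size between this scale and twice it, so the scale of $e$ is essentially the degree of its head, and from the point of view of a fixed vertex $v$ \emph{all} the edges charged to $v$ live at the single scale $\deg v$.

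Next I would colour the edges of $H$ independently using $k=O(\ln d)$ colours drawn from a truncated geometric distribution, $\Pr[c(e)=i]\asymp 2^{-i}$ for $i=1,\dots,k$, so that a colour class of ``the right scale'' is present, in expectation $\Theta(1)$ times, inside every neighbourhood regardless of its size. For each edge $e$ let $A_e$ be the bad event that no colour appears exactly once on $E(u)\cup E(v)$. Each $A_e$ is determined by the colours of the $O(d)$ edges meeting $\{u,v\}$, so $A_e$ is mutually independent of all but the $O(d^2)$ events $A_f$ with $f$ within distance two of $e$; the symmetric Local Lemma will therefore close provided I can show $\Pr[A_e]=O(d^{-3})$. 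The routine part is checking that, at the level $i^\ast$ matching the scale of $e$, the class $c^{-1}(i^\ast)$ meets $E(u)\cup E(v)$ a bounded number of times with good probability.

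The hard part is exactly this anti-concentration estimate: a single geometric colouring makes each colour class a singleton in a given neighbourhood only with \emph{constant} probability, and naive amplification by repeating the colouring at every scale independently is what costs the extra logarithm and yields $O(\ln^2 d)$ (this is unavoidable in general, as the Glebov--Szab\'o--Tardos graphs show, and is why Theorem~\ref{thm_regularGraphs} can only afford a single scale). To beat $d^{-2}$ with one pool of $O(\ln d)$ colours I would exploit that $N[e]$ is the union of the two cliques $E(u)$ and $E(v)$: within a clique a colour is a singleton as soon as it is used on a unique edge, so I can reserve a secondary block of $O(\ln d)$ \emph{scale-independent} colours that act like a random priority on each star and, because the same block serves every clique size at once, drive the probability that neither star exhibits a uniquely-ranked top colour down to $d^{-3}$ without multiplying the palette by $\ln d$. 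I expect the delicate step to be showing that the two stars cannot simultaneously sabotage each other's unique colour --- i.e. that a rank which is unique inside $E(v)$ is, with the required probability, not duplicated by $E(u)$ --- and, if a clean one-shot bound proves elusive, to fall back on a two-phase argument: keep the random colouring, observe that the set of still-bad edges is locally sparse, and recolour them using a few extra colours, again leaning on the clique structure so that each correction is local and conflict-free.
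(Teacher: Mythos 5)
Your reduction to edge-colouring $H$ and the intended use of the Local Lemma are fine as far as they go, but the proposal has a genuine gap at exactly the step you flag as ``the hard part,'' and that step cannot be repaired within your one-shot framework. With a single truncated-geometric palette of $k=O(\ln d)$ colours, only the $O(1)$ levels near the critical scale $i^\ast\approx\log_2|N[e]|$ have any chance of producing a uniquely occurring colour in $N[e]$: levels well below $i^\ast$ are overloaded, and each level $i^\ast+j$ is empty with probability roughly $e^{-2^{-j}}$, so the product of these constants over all higher levels is still bounded away from $0$. Consequently $\Pr[A_e]$ is bounded below by a positive constant independent of $d$, nowhere near the $O(d^{-3})$ (or even $o(d^{-2})$) that the symmetric Local Lemma needs against a dependency degree of $\Theta(d^2)$. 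The proposed remedy --- a secondary block of $O(\ln d)$ ``scale-independent'' colours acting as a random priority on each star --- is not a worked-out mechanism: to push a constant failure probability down to $d^{-3}$ you would need $\Theta(\ln d)$ essentially independent chances of success, and that is precisely the amplification that costs the second logarithm you are trying to avoid. Nothing in the two-clique structure of $N[e]$ obviously circumvents this.

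The fallback you mention in your last sentence is in fact the right idea and is essentially what the paper does, but it must be the main argument, not an afterthought, and it needs ingredients you have not supplied. The paper iterates $O(\ln\Delta)$ rounds, each using only $9$ fresh colours: in a round one picks, for every vertex of degree at least $\Delta/2$, a single random incident edge (so the chosen edges span components that are trees or unicyclic, which by Lemma~\ref{lemma_uniqueEdge} admit a $3$-edge-colouring giving every vertex a uniquely coloured incident edge), combines this with a random $\{0,1\}$ label per vertex, and shows via Talagrand's inequality plus the Local Lemma that every high-degree vertex has at least $e^{-4}\Delta$ of its incident edges \emph{satisfied}; those edges are removed and the maximum degree drops by a constant factor. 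The quantity that concentrates there is ``number of satisfied edges at $v$,'' a Lipschitz, certifiable sum --- which is why polynomially small failure probabilities are attainable per round, unlike your event ``some colour is unique in $N[e]$.'' To salvage your write-up you would need to (i) make the per-round progress guarantee quantitative and uniform over high-degree vertices, (ii) ensure that later rounds never destroy the uniqueness certificate of an already-satisfied edge (the paper does this by giving each round a disjoint palette), and (iii) check that $O(\ln\Delta)$ rounds at $O(1)$ colours each suffice once the degree falls below a constant. As it stands, the central claim $\Pr[A_e]=O(d^{-3})$ is false for the colouring you describe, so the proof does not go through.
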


We also show that Theorem \ref{thm_lineGraphs} is tight up to a constant multiplicative factor, as witnessed by line graphs of complete graphs. Note that this result gives a logarithmic (in $\Delta$) lower bound on the conflict-free chromatic number for a specific family of graphs, unlike other lower bounds mentioned, based on probabilistic constructions.

\begin{theorem}
\label{thm_cliqueLineGraph}
Let $G$ be a line graph of $K_n$ for some $n>1$. The conflict-free chromatic number of $G$ is at least $\Omega\left(\ln n\right)$.
\end{theorem}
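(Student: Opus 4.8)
The plan is to reformulate the statement as an edge-colouring problem on $K_n$ and then run a pigeonhole-plus-counting argument. Recall that the vertices of $G=L(K_n)$ are the edges of $K_n$, and two of them are adjacent precisely when the corresponding edges share an endpoint. Thus a conflict-free colouring of $G$ with $c$ colours is exactly a colouring $\phi$ of $E(K_n)$ such that for every edge $\{i,j\}$ some colour occurs exactly once among the edges incident to $i$ or to $j$ (this set, together with $\{i,j\}$ itself, is the closed neighbourhood of $\{i,j\}$ in $G$). I would fix such a $\phi$ and aim to prove $n\le 3^{c}(c+1)$, which immediately gives $c=\Omega(\ln n)$.

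First I would attach to every vertex $v\in[n]$ a \emph{type}: the vector $\tau(v)\in\{0,1,2\}^{c}$ whose $x$-th coordinate records whether colour $x$ appears on $0$, on exactly $1$, or on at least $2$ of the edges of $K_n$ incident to $v$. There are at most $3^{c}$ possible types, so if $n>3^{c}(c+1)$ then by pigeonhole some type $\tau$ is shared by a set $S\subseteq[n]$ with $|S|\ge c+2$; the goal is to derive a contradiction from the existence of such an $S$.

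The crux is the following observation about edges inside $S$. Take $i,j\in S$ and let $x$ be a colour witnessing conflict-freeness of $\{i,j\}$, i.e.\ the colour appearing on a unique edge $e$ incident to $i$ or $j$. If $e\neq\{i,j\}$, say $e$ is incident to $i$ only, then $x$ appears exactly once at $i$ and not at all at $j$, forcing $\tau(i)_x=1\neq 0=\tau(j)_x$ and contradicting $\tau(i)=\tau(j)$; the symmetric case is identical. Hence necessarily $e=\{i,j\}$, so $\phi(\{i,j\})$ is a colour occurring exactly once among the edges at $i$ and exactly once among the edges at $j$ — every edge inside $S$ is ``self-witnessed''. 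This case analysis, in which equality of types rules out both asymmetric witnesses and leaves only the diagonal one, is the step I expect to require the most care.

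Finally I would count at a single vertex $i\in S$. The $|S|-1$ edges joining $i$ to the other vertices of $S$ are all self-witnessed, so each of their colours occurs exactly once among the edges at $i$; in particular these $|S|-1$ colours are pairwise distinct and all belong to $[c]$, whence $|S|-1\le c$, contradicting $|S|\ge c+2$. This establishes $n\le 3^{c}(c+1)$, and since $c+1\le 3^{c}$ we obtain $3^{2c}\ge n$, that is $c\ge\tfrac12\log_3 n=\Omega(\ln n)$, as claimed.
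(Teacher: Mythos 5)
Your proposal is correct and follows essentially the same strategy as the paper: classify vertices by their colour profile, pigeonhole to get a large homogeneous class, show every edge inside that class must be witnessed by its own colour, and derive a contradiction by counting. The only (immaterial) differences are that you use a ternary multiplicity type where the paper uses just the set of colours present at a vertex ($3^c$ versus $2^c$ classes), and you conclude by counting distinct colours at a single vertex where the paper covers the class by at most $c$ matchings.
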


There is another way of formulating Theorem \ref{thm_lineGraphs}. We can say that an edge coloring of a graph $G$ is \emph{conflict-free} if for every edge $e\in E(G)$ there is a color that appears exactly once among edges that share a vertex with $e$ (including $e$ itself), and refer to the minimum number of colors in such a coloring as the \emph{conflict-free chromatic index} of $G$. In this language, Theorem \ref{thm_lineGraphs} says that the conflict-free chromatic index of a graph with maximum degree $\Delta$ is of order at most $\ln\Delta$. 
In view of Theorem~\ref{thm_cliqueLineGraph}, it therefore may be regarded as order-wise analog of Vizing's theorem,
in the same way that aforementioned results were related to Brooks' theorem.
It is thus a somewhat surprising phenomenon that unlike in the case of the fundamental results of Vizing and Brooks,
the maximal in terms of $\Delta$ values of both conflict-free invariants are of distinct magnitudes, and that it is the edge coloring variant that requires less colors (note that a proper edge coloring, contrary to a proper vertex coloring, does not have to be conflict free).

The rest of the paper is organized as follows. In Section \ref{section_prelims} we discuss probabilistic tools that will be used. In Section \ref{section_SideProofs} we give proofs of Theorems \ref{thm_regularGraphs} and \ref{thm_cliqueLineGraph}. Section \ref{section_mainProof} is devoted to the proof of the main result, Theorem \ref{thm_lineGraphs}. We conclude with Section \ref{section_final}, where we discuss some possible directions of 
further research.

\section{Preliminaries}
\label{section_prelims}

We will use three fairly standard probabilistic tools. The first one is the Lov\'asz Local Lemma \cite{LLLRef}.

\begin{theorem}[The Local Lemma]
\label{thm_lll}
Let $A_1,A_2,\ldots,A_n$ be events in an arbitrary probability space. Suppose that each event $A_i$ is mutually independent of a set of all the other events $A_j$ but at most $D$, and that $\Pr(A_i)\leq p$ for all $1\leq i \leq n$. If
$$
ep(D+1)\leq 1
$$
then $\Pr\left(\bigcap_{i=1}^n \widebar{A_i}\right)>0$.
\end{theorem}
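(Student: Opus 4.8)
The plan is to deduce this symmetric statement from the weighted (general) form of the Local Lemma, whose proof is an induction on conditional probabilities. Write $N(i)$ for the set of indices $j\neq i$ on which $A_i$ genuinely depends, so that $A_i$ is mutually independent of $\{A_j : j\notin N(i)\cup\{i\}\}$ and $|N(i)|\le D$. I would fix the uniform weight $x=\frac{1}{D+1}$ and prove the key claim that for every index $i$ and every set $S\subseteq\{1,\dots,n\}\setminus\{i\}$ with $\Pr\!\big(\bigcap_{j\in S}\overline{A_j}\big)>0$,
$$
\Pr\left(A_i \;\middle|\; \bigcap_{j\in S}\overline{A_j}\right)\le x .
$$
Granting the claim, the theorem follows by telescoping: for an arbitrary ordering of the events,
$$
\Pr\left(\bigcap_{i=1}^n \overline{A_i}\right)=\prod_{i=1}^n \Pr\left(\overline{A_i} \;\middle|\; \bigcap_{j<i}\overline{A_j}\right)\ge (1-x)^n>0 ,
$$
where each factor is at least $1-x$ by the claim (and positivity of the partial conditionings is maintained along the way).

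I would prove the claim by induction on $|S|$. The base case $S=\varnothing$ is immediate, since $\Pr(A_i)\le p\le\frac{1}{e(D+1)}\le x$. For the inductive step, split $S$ into $S_1=S\cap N(i)$ and $S_2=S\setminus S_1$. If $S_1=\varnothing$ then $A_i$ is independent of the conditioning event and the probability is exactly $\Pr(A_i)\le x$. Otherwise I would write the conditional probability as a ratio,
$$
\Pr\left(A_i \;\middle|\; \bigcap_{j\in S}\overline{A_j}\right)=\frac{\Pr\left(A_i\cap\bigcap_{j\in S_1}\overline{A_j} \;\middle|\; \bigcap_{l\in S_2}\overline{A_l}\right)}{\Pr\left(\bigcap_{j\in S_1}\overline{A_j} \;\middle|\; \bigcap_{l\in S_2}\overline{A_l}\right)} ,
$$
bound the numerator from above by $\Pr\!\big(A_i\mid\bigcap_{l\in S_2}\overline{A_l}\big)=\Pr(A_i)\le p$ (using independence of $A_i$ from the events indexed by $S_2$), and bound the denominator from below by expanding it as a product of $|S_1|$ conditional factors and applying the induction hypothesis to each---every conditioning set that arises has size strictly smaller than $|S|$, which keeps the induction well-founded, and each factor is at least $1-x$, giving a denominator of at least $(1-x)^{|S_1|}\ge(1-x)^{D}$. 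Hence the ratio is at most $p/(1-x)^{D}$, and it remains to check $p/(1-x)^{D}\le x$, i.e. $p\le\frac{1}{D+1}\left(1-\frac{1}{D+1}\right)^{D}$. Since $\left(1-\frac{1}{D+1}\right)^{D}>\frac1e$, this is guaranteed by the hypothesis $ep(D+1)\le 1$, completing the argument.

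The routine parts are the elementary probability manipulations; the one place demanding care is the bookkeeping in the inductive step. The crux is to split the conditioning set exactly along the dependency neighborhood $N(i)$ so that (i) the numerator collapses to the unconditional $\Pr(A_i)$ by mutual independence, and (ii) every sub-conditioning used to estimate the denominator involves strictly fewer events than $S$, so that the induction hypothesis legitimately applies. Verifying that this decomposition is valid---and that all conditionings remain on positive-probability events---is the main obstacle; the final numerical step is then just the estimate $\left(1-\frac{1}{D+1}\right)^{D}>\frac1e$.
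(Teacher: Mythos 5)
Your proposal is correct: it is the standard derivation of the symmetric Local Lemma via the uniform weighting $x=\frac{1}{D+1}$ and induction on the size of the conditioning set, which is precisely the argument in Alon--Spencer. The paper itself offers no proof to compare against---it states this lemma as a known tool and cites the reference---so there is nothing further to reconcile; the only cosmetic remark is that the degenerate case $D=0$ (where $x=1$ makes the telescoped bound $(1-x)^n$ vacuous) should be dispatched separately by noting that all events are then mutually independent and each factor is at least $1-p>0$.
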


We will also need the Chernoff bound in a standard version \cite{MolloyReedColoringBook}.

\begin{theorem}[Chernoff Bound]
\label{thm_chernoff}
For any $0\leq t \leq np$,
$$
\Pr(\BIN(n,p)>np+t)\leq e^{-\frac{t^2}{3np}} \textrm{ and } \Pr(\BIN(n,p)<np-t)\leq e^{-\frac{t^2}{2np}},
$$
where $\BIN(n,p)$ is the sum of $n$ independent Bernoulli variables, each equal to $1$ with probability $p$ and $0$ otherwise.
\end{theorem}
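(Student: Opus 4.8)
The plan is to use the exponential moment method (the standard Bernstein/Chernoff trick). Write $X = \BIN(n,p) = \sum_{i=1}^n X_i$, where the $X_i$ are independent Bernoulli$(p)$ variables, and set $\mu = np$. For the upper tail, fix $\lambda > 0$ and apply Markov's inequality to $e^{\lambda X}$:
$$
\Pr(X > \mu + t) = \Pr\left(e^{\lambda X} > e^{\lambda(\mu + t)}\right) \leq e^{-\lambda(\mu+t)}\,\E\left[e^{\lambda X}\right].
$$
By independence and the inequality $1 + x \leq e^x$, the moment generating function factorizes and is controlled by
$$
\E\left[e^{\lambda X}\right] = \prod_{i=1}^n \E\left[e^{\lambda X_i}\right] = \left(1 + p(e^\lambda - 1)\right)^n \leq e^{\mu(e^\lambda - 1)}.
$$
Combining the two displays gives $\Pr(X > \mu + t) \leq \exp\!\left(\mu(e^\lambda - 1) - \lambda(\mu + t)\right)$ for every $\lambda > 0$.

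The second step is to optimize over $\lambda$. Writing $\delta = t/\mu \in [0,1]$ (the hypothesis $t \leq np$ is exactly $\delta \leq 1$), the exponent is minimized at $e^\lambda = 1 + \delta$, which yields the sharp relative-entropy bound
$$
\Pr(X > \mu + t) \leq \exp\!\Big(-\mu\big((1+\delta)\ln(1+\delta) - \delta\big)\Big).
$$
It then remains to weaken this into the stated Gaussian-type estimate, which reduces to the elementary one-variable inequality
$$
(1+\delta)\ln(1+\delta) - \delta \geq \frac{\delta^2}{3} \qquad (0 \leq \delta \leq 1).
$$
This I would verify by calculus: the difference of the two sides vanishes together with its first derivative at $\delta = 0$, and checking that this first derivative stays nonnegative on $[0,1]$ (despite the second derivative changing sign at $\delta = 1/2$) shows the difference is nondecreasing, hence nonnegative. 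Substituting $\mu\delta^2 = t^2/(np)$ then gives the first claimed bound $e^{-t^2/(3np)}$.

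The lower tail is handled symmetrically, applying Markov's inequality to $e^{-\lambda X}$ for $\lambda > 0$. The same factorization yields $\Pr(X < \mu - t) \leq \exp\!\left(\mu(e^{-\lambda}-1) + \lambda(\mu - t)\right)$; optimizing at $e^{-\lambda} = 1 - \delta$ produces the bound $\exp(-\mu(\delta + (1-\delta)\ln(1-\delta)))$, and the proof concludes via the companion inequality $\delta + (1-\delta)\ln(1-\delta) \geq \delta^2/2$ on $[0,1)$. I expect the only real work to lie in these two scalar inequalities: the upper-tail one is the more delicate, since its second derivative changes sign inside $[0,1]$ and one must instead track monotonicity of the first derivative, whereas for the lower tail the relevant function is convex throughout with a double zero at the origin, which immediately yields the sharper constant $2$.
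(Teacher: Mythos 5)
Your proposal is correct, but there is nothing in the paper to compare it against: the paper states this Chernoff bound as a standard tool and simply cites the book of Molloy and Reed \cite{MolloyReedColoringBook}, giving no proof of its own. What you wrote is the classical exponential-moment derivation, and all of its steps check out: Markov's inequality applied to $e^{\pm\lambda X}$, the bound $\E\left[e^{\lambda X}\right]\leq e^{\mu(e^{\lambda}-1)}$ via $1+x\leq e^{x}$, optimization at $e^{\lambda}=1+\delta$ (respectively $e^{-\lambda}=1-\delta$), and then the two scalar inequalities. The delicate one is indeed the upper tail: with $g(\delta)=(1+\delta)\ln(1+\delta)-\delta-\delta^{2}/3$ one has $g(0)=g'(0)=0$ and $g''$ changes sign at $\delta=1/2$, so $g'$ is unimodal on $[0,1]$ and attains its minimum at an endpoint; since $g'(1)=\ln 2-\tfrac{2}{3}>0$, we get $g'\geq 0$ and hence $g\geq 0$ on $[0,1]$, exactly as you sketched. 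This is also precisely where the hypothesis $t\leq np$ enters, since the inequality with constant $3$ fails for large $\delta$. The lower-tail inequality is immediate from convexity, or from the expansion $\delta+(1-\delta)\ln(1-\delta)=\sum_{k\geq 2}\frac{\delta^{k}}{k(k-1)}\geq \delta^{2}/2$. Two harmless loose ends worth a sentence in a full write-up: the endpoint $t=np$ of the lower tail ($\delta=1$) should be treated separately, but there $\Pr(\BIN(n,p)<0)=0$ so the bound is trivial; and Markov's inequality applies to the strict-inequality events without any change.
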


Our final tool is Talagrand's Inequality, in a slightly weaker, but more convenient version from a paper by Molloy and Reed \cite{MolloyTalagrandRef}.

\begin{theorem}[Talagrand's Inequality]
\label{thm_talagrand}
Let $X$ be a nonnegative random variable determined by $\ell$ independent trials $T_1,\ldots,T_n$. Suppose there exist constants $c,k>0$ such that for every set of possible outcomes of the trials, we have:
\begin{enumerate}
\item changing the outcome of any one trial can affect $X$ by at most $c$, and
\item for each $s>0$, if $X\geq s$ then there is a set of at most $ks$ trials whose outcomes certify that $X\geq s$.
\end{enumerate}
Then for any $t\geq 0$ we have
$$
\Pr\left(\left|X-\E(X)\right| > t+20c\sqrt{k\E(X)}+64c^2k\right)\leq 4e^{-\frac{t^2}{8c^2k(\E(X)+t)}}.
$$
\end{theorem}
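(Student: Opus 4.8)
The plan is to derive this convenient, combinatorially phrased version from Talagrand's original isoperimetric inequality on product spaces, which is the standard route and the one underlying the Molloy--Reed formulation. Writing the trials as a product probability space $\Omega=\prod_{i=1}^n\Omega_i$ and, for $x\in\Omega$ and $A\subseteq\Omega$, defining the convex distance
$$
d_T(x,A)\;=\;\sup_{\|\alpha\|_2\le 1}\ \min_{y\in A}\ \sum_{i:\,x_i\ne y_i}\alpha_i,
$$
the original inequality asserts that $\Pr(A)\cdot\Pr\bigl(d_T(\cdot,A)\ge\tau\bigr)\le e^{-\tau^2/4}$ for every $\tau\ge 0$. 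I would take this as a black box, so that all of the real work lies in translating the two hypotheses on $X$ -- the $c$-Lipschitz condition and the $ks$-certifiability -- into a useful lower bound on $d_T$.

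The heart of the argument is a purely deterministic lemma: for a threshold set $A=\{X\le a\}$ and any point $x$ with $X(x)\ge b>a$, one has $d_T(x,A)\ge (b-a)/(c\sqrt{kb})$. To see this, use certifiability at level $b$ to fix a set $I$ of coordinates with $|I|\le kb$ such that every point agreeing with $x$ on $I$ has value at least $b$. Given any $y\in A$, form the hybrid point $w$ that agrees with $x$ on $I$ and with $y$ off $I$; then $X(w)\ge b$ by the certificate, while $w$ and $y$ differ only on $J=\{i\in I:x_i\ne y_i\}$, so the Lipschitz condition forces $b-a\le X(w)-X(y)\le c|J|$, i.e.\ $|J|\ge(b-a)/c$. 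Choosing the test vector $\alpha$ uniform on $I$ (that is, $\alpha_i=|I|^{-1/2}$ for $i\in I$ and $0$ otherwise) gives $\sum_{i:\,x_i\ne y_i}\alpha_i\ge |J|/\sqrt{|I|}\ge (b-a)/(c\sqrt{kb})$, and minimizing over $y\in A$ proves the claim. The point to notice is that certifiability is exactly what identifies \emph{which} coordinates must differ, while the Lipschitz bound controls \emph{how many} of them must.

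Applying the lemma to a median $m$ of $X$ yields two-sided concentration about $m$. For the upper tail take $a=m$ and $b=s>m$: since $\{X\ge s\}\subseteq\{d_T(\cdot,\{X\le m\})\ge(s-m)/(c\sqrt{ks})\}$ and $\Pr(X\le m)\ge\tfrac12$, Talagrand's inequality gives $\Pr(X\ge s)\le 2e^{-(s-m)^2/(4c^2ks)}$. For the lower tail take $a=s<m$ and $b=m$: now $\{X\ge m\}\subseteq\{d_T(\cdot,\{X\le s\})\ge(m-s)/(c\sqrt{km})\}$ and $\Pr(X\ge m)\ge\tfrac12$, so $\Pr(X\le s)\le 2e^{-(m-s)^2/(4c^2km)}$. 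Note that each application uses a certificate only for a point whose value is being bounded \emph{from below} (at level $b$), so the certificate-size bound is legitimately available throughout. The remaining, and frankly more tedious, step is cosmetic repackaging: I would integrate these tail estimates to show $|\E(X)-m|=O\bigl(c\sqrt{k\,\E(X)}+c^2k\bigr)$, then substitute $s=\E(X)+t$, absorb the median--mean gap into the additive correction $20c\sqrt{k\E(X)}+64c^2k$, and loosen the constant in the exponent from $4$ to $8$ (and replace $s$ by $\E(X)+t$ in the denominator) to swallow the lower-order terms; a union bound over the two tails accounts for the leading factor $4$.

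I expect the main obstacle to be twofold. Conceptually, the crux is getting the hybrid-point construction of the deterministic lemma exactly right, since this is the one genuinely delicate place where the two hypotheses interact. Technically, the real grind is the median-to-mean conversion together with the careful tracking of constants needed to land precisely on $20$, $64$, and the denominator $8c^2k(\E(X)+t)$ rather than on some other admissible constants; none of this is deep, but it is where an otherwise clean argument becomes bookkeeping.
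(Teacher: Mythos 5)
This statement is a quoted tool, not a result of the paper: the authors take it verbatim from Molloy and Reed (their reference for Talagrand's inequality) and give no proof, so there is no in-paper argument to compare against. Your proposal is the standard derivation underlying that cited version, and it is sound: the hybrid-point lemma is exactly right (certifiability pins down the coordinate set $I$ with $|I|\le kb$, the Lipschitz condition forces the hybrid $w$ and $y\in A$ to differ in at least $(b-a)/c$ coordinates of $I$, and the uniform unit test vector on $I$ then lower-bounds the convex distance by $(b-a)/(c\sqrt{kb})$), the two-sided concentration around the median follows correctly from the convex-distance inequality, and the median-to-mean conversion by integrating the tails is the standard closing step. The only part you defer is the bookkeeping that lands on the specific constants $20$, $64$, and the $8c^2k(\E(X)+t)$ denominator (including the low-expectation edge cases where $\E(X)$ is small relative to $c^2k$); that is genuinely where the remaining labor lies, and those constants are chosen in the literature with enough slack for exactly this conversion to go through.
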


\section{Proofs of Theorems \ref{thm_regularGraphs} and \ref{thm_cliqueLineGraph}}
\label{section_SideProofs}

We begin with the proof of Theorem \ref{thm_regularGraphs} (a logarithmic bound for near regular graphs). In our argument we start by randomly selecting a subset $V'\subseteq V(G)$ of vertices such that every vertex of $G$ has $\Theta\left(\ln \Delta\right)$ neighbors in $V'$ and then color $V'$ randomly, using $\Theta\left(\ln \Delta\right)$ colors. We show that every vertex of $G$ sees some color exactly once with probability at least $1-e^{-\Theta(\ln \Delta)}$ which, after a right choice of constants, is just enough to use the Lov\'asz Local Lemma to guarantee that every vertex sees some color exactly once. 

\begin{proof}[Proof of Theorem \ref{thm_regularGraphs}]
Wherever needed we assume that the maximum degree $\Delta$ of $G$ is large enough.
We start by picking a set $V'\subseteq V(G)$ such that for every vertex $v\in V(G)$ we have 
$$350\ln\Delta\leq\left|V' \cap N[v]\right|\leq \frac{450}{\alpha}\ln\Delta.$$

Let $V'$ be selected randomly where each vertex of $G$ is placed in $V'$ with probability $\frac{400 \ln\Delta}{\alpha\Delta}$, independently at random. We will show that the desired property holds with positive probability.

For a vertex $v\in V(G)$, let $A_v$ be the event that $\left|V' \cap N[v]\right|<350\ln\Delta$. Note that $\left|V' \cap N[v]\right|$ is greater or equal to a random variable with distribution $\BIN(\alpha \Delta,\frac{400 \ln\Delta}{\alpha\Delta})$. Therefore, by using Theorem \ref{thm_chernoff} with $t=50\ln\Delta$, we get that
$$
\Pr(A_v)\leq \Pr\left(\BIN(\alpha \Delta,\frac{400 \ln\Delta}{\alpha\Delta}) < 350\ln\Delta\right) \leq e^{-\frac{50^2\ln^2\Delta}{800\ln\Delta}}\leq \Delta^{-3}.
$$

Now let $B_v$ be the event that $\left|V' \cap N[v]\right|> \frac{450}{\alpha}\ln\Delta$. Similarly as above, $\left|V' \cap N[v]\right|$ is less or equal to a random variable with distribution $\BIN(\Delta+1,\frac{400 \ln\Delta}{\alpha\Delta})$, so by Theorem \ref{thm_chernoff} with $t=\frac{50}{\alpha}(1-\frac{8}{\Delta})\ln\Delta$ we obtain that
$$
\Pr(B_v)\leq \Pr\left(\BIN(\Delta+1,\frac{400 \ln\Delta}{\alpha\Delta}) > \frac{450\ln\Delta}{\alpha}\right) \leq e^{-\frac{50^2(1-\frac{8}{\Delta})^2\ln^2\Delta}{1200\alpha(1+\frac{1}{\Delta})\ln\Delta}}\leq \Delta^{-2.05}.
$$

Note that each of the events $A_v$ and $B_v$ is mutually independent of the set of all other events $A_u$ and $B_u$ for all $u$ which are at distance at least $3$ from $v$ in $G$. Therefore, by Theorem \ref{thm_lll} with $p=\Delta^{-2.05}$ and $D=2\Delta^2+1$ we conclude that $\Pr\left(\bigcap_{v\in V(G)}\widebar{A_v}\cap \widebar{B_v}\right)>0$, hence the desired set $V'$ exists.

Now we construct a conflict-free coloring of $G$ such that all vertices outside $V'$ are colored with the same color, and vertices from $V'$ are colored using further $\lceil\frac{2700}{\alpha}\ln\Delta\rceil$ colors, independently and uniformly at random. 

For $v\in V(G)$, let $X_v$ be a random variable that counts the number of vertices $w$ in $N[v]\cap V'$ such that the color of $w$ is the same as the color of some other vertex from $N[v]\cap V'$. Let $d_v:=\left|N[v]\cap V'\right|$. Note that the coloring is conflict-free if $X_v<d_v$ for every $v\in V(G)$; now we will show that this happens with positive probability.

Given $v$ and $w\in N[v]\cap V'$, the probability that $w$ is counted in $X_v$ (i.e. that the color of $w$ appears in $N[v]\cap V'\setminus \lbrace w\rbrace$) is at most
$$
1-\left(1-\frac{1}{\frac{2700}{\alpha}\ln\Delta}\right)^{d_v-1}
\leq 1-\left(1-\frac{1}{\frac{2700}{\alpha}\ln\Delta}\right)^{\frac{450}{\alpha}\ln\Delta-1} \leq 1-e^{-\frac{1}{6}} \leq \frac{1}{6}.
$$
Therefore, $\E(X_v)\leq \frac{1}{6}d_v$.

Note that $X_v$ satisfies the assumptions of Theorem \ref{thm_talagrand} with $\ell=d_v$ and $c=k=2$, where the trial $T_i$ corresponds to the choice of color for the $i$-th vertex from $N[v]\cap V'$. Indeed, recoloring a single vertex can change $X_v$ by at most $2$ and $X_v\geq s$ can be certified by colors of $s$ vertices from $N[v]\cap V'$ and, for each of them, a color of some other vertex from $N[v]\cap V'$. Therefore, by Theorem \ref{thm_talagrand} with $t=\frac{1}{2}d_v$, we obtain that
$$
\Pr\left(\left|X_v-\E(X_v)\right|>\frac{1}{2}d_v+40\sqrt{2\E(X_v)}+512\right)\leq 4e^{-\frac{\frac{1}{4}d_v^2}{64(\E(X_v)+\frac{1}{2}d_v)}}.
$$
Since $\E(X_v)\leq \frac{1}{6}d_v$, for $d_v$ large enough (i.e. for $\Delta$ sufficiently large), we thus obtain that
$$
\Pr(X_v = d_v)\leq 4e^{-\frac{\frac{1}{4}d_v^2}{\frac{128}{3}d_v}} = 4e^{-\frac{3}{512}d_v} \leq 4e^{-\frac{1050}{512}\ln\Delta}\leq 4\Delta^{-2.05}.
$$

Note that the event $X_v=d_v$ is mutually independent of the set of all events $X_u=d_u$ for $u$ that are at distance at least $3$ from $v$ in $G$. Therefore, by Theorem \ref{thm_lll}, with $p=4\Delta^{-2.05}$ and $D=\Delta^2$, we conclude that with positive probability $X_v<d_v$ for all $v\in V(G)$. Hence, the obtained coloring is conflict-free, which completes the proof.
\end{proof}

Now we proceed to the proof of Theorem \ref{thm_cliqueLineGraph}. We will say that for a graph $H$, an edge-coloring $f$ of $H$ and an edge $e\in E(H)$, the edge $e$ is \emph{satisfied by $f$ with the color $c$} if exactly one of the edges adjacent to $e$ (including $e$) is colored with the color $c$; similarly, an edge is \emph{satisfied} by $f$ if it is satisfied by $f$ with at least one color.

\begin{proof}[Proof of Theorem \ref{thm_cliqueLineGraph}]
%
Fix $n>1$ and set $x:=\left\lfloor \log_2 n - \log_2\log_2 n - 1 \right\rfloor$. Let $f$ be a coloring of edges of $K_n$ that uses $x$ colors. We will show that at least one edge of $K_n$ is not satisfied by $f$, which immediately completes the proof.

Consider a relation $\sim$ defined on vertices of $K_n$ such that $u\sim v$ iff the set of colors used by $f$ on edges incident to $u$ is equal to the set of colors used by $f$ on edges incident to $v$. Note that $\sim$ is an equivalence relation.

Let $A$ be the largest equivalence class of $\sim$. Since there are at most $2^{x}$ of them, it follows that $\left|A\right|\geq \frac{n}{2^x}$, and by the choice of $x$ we conclude that $\left|A\right|> x+1$.

Now consider any two vertices $u,v\in A$. Note that, since $u\sim v$, the edge $uv$ can be satisfied by $f$ with a color $c$ only if $f(uv)=c$ and $uv$ is the only edge colored with $c$ that is incident to $u$ or $v$. Therefore, the set of all edges with both endpoints in $A$ satisfied by any given color $c$ is a matching. Recall that there are $x$ colors, so all edges with both endpoints in $A$ satisfied by $f$ can be covered by $x$ matchings. Since $\left|A\right|> x+1$, some edges in $A$ are thus not satisfied, which completes the proof.
\end{proof}

\section{Proof of the Main Theorem}
\label{section_mainProof}

In this section we prove Theorem \ref{thm_lineGraphs}, which implies the $O\left(\ln\Delta\right)$ bound for line graphs. In what follows $H$ is used to denote an arbitrary graph and $G$ will be the line graph of $H$ -- that is, we will be coloring edges of $H$ and vertices of $G$.

The proof is iterative. At each step we randomly pick a very small set $S$ of edges of $H$ -- where very small means that every vertex of $H$ is expected to be incident with a constant number of edges from $S$. Then we find a coloring $f$ of $S$ such that for every high-degree vertex $v$ of $H$ at least a constant fraction of edges incident to $v$ is satisfied by $f$ -- recall that an edge $e$ of a graph $H$ is \emph{satisfied} by a partial edge coloring of $H$ if there is a color $c$ such that $e$ is adjacent to exactly one edge of color $c$. Then, we forget about all satisfied edges and proceed to the next step, where we use a new set of colors. Note that after a logarithmic (in $\Delta$) number of such steps we will be left with a graph that has a constant maximum degree, and so its edges can be colored with a constant number of colors -- that is how $\ln \Delta$ appears in the proof.

In order to succeed we need to ensure that at each step a constant number of colors is used, which requires a proper selection of $S$. For each vertex $v$ of sufficiently high degree we will add to $S$ one edge incident to $v$ -- this way, each connected component of $H[S]$ will be either a tree or a unicyclic graph. We start by showing that such graphs admit $3$-colorings of edges that are conflict-free and have a unique color around each vertex; those colorings will be used to construct $f$.

\begin{lemma}
\label{lemma_uniqueEdge}
Let $H$ be a graph such that each connected component of $H$ contains at most one cycle. There is a conflict-free coloring of edges of $H$ with $3$ colors such that for every vertex $v$ there is a color $c_v$ such that $v$ is incident with exactly one edge of color $c_v$.
\end{lemma}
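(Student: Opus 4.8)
The plan is to color each connected component of $H$ independently, since both the set of edges incident to any vertex and the closed edge-neighborhood of any edge lie entirely within a single component. Hence I may assume $H$ is connected, so it is either a tree or a unicyclic graph, and I will work throughout with the color set $\{1,2,3\}$.

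For the tree case I would root $H$ at an arbitrary vertex $r$ and assign to every vertex $v$ an ordered pair of distinct colors $(\sigma(v),\tau(v))$, then color the edge from $v$ to its first child with $\sigma(v)$ and every other edge from $v$ to a child with $\tau(v)$. The propagation rule is $\{\sigma(v),\tau(v)\}=\{1,2,3\}\setminus\{\sigma(u)\}$, where $u$ is the parent of $v$, with $\sigma(v)$ chosen in addition to differ from the color of $v$'s parent edge. Under this rule, at every vertex $v$ the color $\sigma(v)$ is carried by exactly one incident edge (the edge to the first child), which supplies the required unique color $c_v$. For the conflict-free property I would verify that every edge $e=uv$, with $u$ the parent, is satisfied by $\sigma(u)$: the unique edge at $u$ bearing $\sigma(u)$ is the edge to $u$'s first child, both of $v$'s child-colors avoid $\sigma(u)$, and $v$'s parent edge either coincides with that edge (when $v$ is the first child) or also avoids $\sigma(u)$; in every case $\sigma(u)$ occurs exactly once in $E(u)\cup E(v)$. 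This is a short split according to whether $v$ is the first child of $u$.

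For the unicyclic case, let $C=v_0v_1\cdots v_{k-1}v_0$ be the unique cycle. I would first give $C$ a proper $3$-edge-coloring, which exists for every cycle length. Writing $a_i,b_i$ for the distinct colors of the two cycle edges at $v_i$, I would color all hanging tree edges at $v_i$ with the remaining color $\rho_i=\{1,2,3\}\setminus\{a_i,b_i\}$; this makes both $a_i$ and $b_i$ occur exactly once at $v_i$, supplying the unique color there. Each hanging subtree is then colored by the tree rule above, rooted at its cycle vertex $v_i$ and seeded with $\sigma(v_i)=a_i$, so that every edge running from $v_i$ into a subtree is satisfied by $a_i$, exactly as in the tree analysis, and the interior subtree edges are handled verbatim.

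The step I expect to be the main obstacle is the conflict-free condition on the cycle edges themselves, since there both endpoints may have large degree and carry many hanging edges, so a priori neither endpoint supplies a globally rare color. The observation that resolves this is that a cycle edge $c_i=v_iv_{i+1}$ is satisfied by its \emph{own} color $b_i$: properness forces the other cycle edges at $v_i$ and $v_{i+1}$ to avoid $b_i$, while pushing every hanging tree edge onto the third color $\rho_i$ (respectively $\rho_{i+1}$) guarantees that no tree edge at either endpoint carries $b_i$ either, so $b_i$ appears exactly once in the closed neighborhood of $c_i$. Once this is established, collecting the three families of edges -- interior tree edges, subtree-attaching edges, and cycle edges -- completes the verification of both the conflict-free property and the unique-color-per-vertex property.
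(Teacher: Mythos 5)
Your proof is correct and follows essentially the same route as the paper's: reduce to connected components, properly $3$-edge-color the unique cycle, put the missing color on all pendant tree edges at each cycle vertex, and propagate colors down the hanging trees so that each vertex sees some color exactly once and each edge is satisfied by a color determined at its parent endpoint. The only real difference is the tree propagation rule --- the paper roots each tree component at a leaf and colors all edges from a vertex to its children with a single color $m(v)$ (shifted cyclically at each level), whereas you distinguish the first child so that an arbitrary root works; both variants establish the two required properties by the same local argument.
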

\begin{proof}
We will show an explicit construction of the desired coloring using $\lbrace 0,1,2\rbrace$ as the set of colors. We will use addition modulo $3$ -- that is, for a color $a\in \lbrace 0,1,2\rbrace$ we will refer to two other colors as $a+1$ and $a+2$.
Note that it is enough to prove the lemma when $H$ is connected. Therefore, we will consider two cases.

First suppose that $H$ contains exactly one cycle. Start by taking a proper edge-coloring $c$ of the cycle that uses $3$ colors. For each vertex $v$ of the cycle, let $m(v)$ be the color that is not assigned in $c$ to any edge incident to $v$. Next extend the function $m$ to all vertices of $H$ so that $m(u)=m(v)+1$ whenever $u$ is a child\footnote{We say that $u$ is a child of $v$ if the two vertices are adjacent and the shortest path from $u$ to the cycle goes through $v$.} of $v$. Now we extend $c$ to all edges of $H$ so that for each vertex $v$, all edges from $v$ to its children are colored by $m(v)$.

Note that $c$ is a conflict-free coloring of $H$, 
because each edge on the cycle is colored differently than its adjacent edges, 
while every edge $uv$ where $u$ is a child of $v$ shares a vertex with exactly one edge of color $m(v)-1$. Moreover, each vertex $v$ on the cycle is incident to exactly one edge of color $m(v)+1$ and $m(v)+2$, and each vertex $u$ outside the cycle is incident to exactly one edge of color $m(u)-1$. Hence, $c$ satisfies the desired properties and the proof is complete in this case.

Now suppose that $H$ contains no cycles, i.e. it is a tree. Pick some vertex $r$ of $H$ of degree $1$ as the root and set $m(r)=0$. Then proceed as in the previous case, i.e. extend $m$ to all vertices of $H$ so that $m(u)=m(v)+1$ for each child $u$ of $v$ and color all edges between a vertex $v$ and its children 
with the color $m(v)$. Note $r$ sees exactly one edge, of color $m(r)$, and this edge does not share a vertex with any other edge of color $m(r)$. For other vertices and edges we can use the same argument as in the first case to conclude that the resulting coloring satisfies the desired properties, hence the proof is complete.
\end{proof}

Now we show the main lemma that corresponds to a single step in the iterative procedure outlined above. For a set of edges $L$ of a graph $H$, by $H\setminus L$ we mean the graph $(V(H),E(H)\setminus L)$.

\begin{lemma}
\label{lemma_lineGraphs}
Let $H$ be a graph with sufficiently large maximum degree $\Delta$. There exists a set $S\subseteq E(H)$ and a coloring $f$ of $S$ using $9$ colors such that 
$$
\Delta\left(H \setminus L\right)\leq \left(1-e^{-4}\right)\Delta \textrm{ ~~and~~ } S\subseteq L
$$
where $L$ is the set of edges satisfied by $f$.
\end{lemma}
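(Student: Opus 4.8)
The plan is to follow the outline preceding the lemma and make it quantitative. Set the threshold $\tau=(1-e^{-4})\Delta$ and call a vertex \emph{high} if its degree exceeds $\tau$; only high vertices can violate $\Delta(H\setminus L)\le\tau$, so it suffices to satisfy at least $d(v)-\tau$ edges at every high $v$. First I would build $S$ by letting each high vertex $v$ choose, independently and uniformly, one incident edge $e_v$, and set $S=\{e_v\}$. Orienting every chosen edge away from its chooser gives each vertex out-degree at most one, so $H[S]$ has at most $|V(H)|$ edges and hence every component of $H[S]$ contains at most one cycle. Lemma~\ref{lemma_uniqueEdge} then supplies a conflict-free $3$-coloring $c$ of $S$ in which every vertex meeting $S$ sees some color exactly once. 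I would promote this to a $9$-coloring $f$ by attaching to each edge an independent uniform label in $\{0,1,2\}$, i.e. $f(e)=(c(e),\sigma_e)$. Refining a conflict-free coloring keeps it conflict-free, and since the only colored edges adjacent to an edge of $S$ lie in $S$, every edge of $S$ is satisfied; thus $S\subseteq L$, the first required property, holds for free.

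For the degree drop, fix a high vertex $v$, let $c_v$ be its Lemma-color, realized on the unique edge $vx_0\in S$ with $c(vx_0)=c_v$, and write $s=\sigma_{vx_0}$, so that $(c_v,s)$ is the unique full color at $v$. For a neighbor $u\neq x_0$ with $vu\notin S$, the edge $vu$ is satisfied by $(c_v,s)$ as soon as $u$ is incident with no $S$-edge of full color $(c_v,s)$; call $u$ \emph{bad} otherwise, and let $B_v$ count the bad neighbors. Then $v$ has at least $d(v)-B_v-O(1)$ satisfied edges, so I only need $B_v\le\tau-O(1)$. Conditioning on the selection $S$, which fixes $c_v$ and all Lemma-colors, the remaining randomness is the labels, and a neighbor $u$ is bad only if one of its $k_u$ many $c_v$-colored $S$-edges receives the label $s$, an event of probability at most $k_u/3$. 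Hence $\E(B_v\mid S)\le\tfrac13\sum_{u\in N(v)}k_u\le\tfrac13\sum_{u\in N(v)}\deg_S(u)$. This is exactly why $9$ and not $3$ colors are needed: without the random label a neighbor carrying any $c_v$-colored edge would be irredeemably bad and $B_v$ could reach $\Delta$, whereas the extra factor $\tfrac13$ pulls the expectation safely below $\tau$.

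It remains to control $\sum_{u\in N(v)}\deg_S(u)$ and to turn expectations into simultaneous bounds. Because a high vertex $w$ picks a given incident edge with probability $1/d(w)<1/\tau$, one gets $\E(\deg_S(u))\le 1+\Delta/\tau=1+\tfrac{1}{1-e^{-4}}$, so $\E\!\big(\sum_{u\in N(v)}\deg_S(u)\big)\le(2+o(1))\Delta$ and $\E(B_v\mid S)\lesssim\tfrac23\Delta<(1-e^{-4})\Delta$. I would run the argument in two stages. In the selection stage, Theorem~\ref{thm_chernoff} bounds the probability that $\sum_{u\in N(v)}\deg_S(u)$ exceeds, say, $2.1\Delta$, and Theorem~\ref{thm_lll} — the relevant events depending only on choices within bounded distance of $v$ — yields a single selection $S$ that is good at every vertex at once. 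In the coloring stage, with this $S$ fixed I would first expose the single label $\sigma_{vx_0}$ that pins down the target $(c_v,s)$; conditioned on it, $B_v$ is a nonnegative function of the remaining independent labels for which relabeling one edge moves $B_v$ by $O(1)$ and $B_v\ge t$ is certified by $O(t)$ labels, so Theorem~\ref{thm_talagrand} gives $B_v\le\E(B_v\mid S,s)+O(\sqrt{\Delta\ln\Delta})<\tau$ with probability $1-e^{-\Omega(\Delta)}$. A second application of Theorem~\ref{thm_lll} then makes $B_v<\tau$ hold at every high $v$ simultaneously.

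The main obstacle is precisely this separation into two stages, which two distinct sensitivities force. If one tried to concentrate $B_v$ over the full randomness at once, changing a single selection trial could alter the component of $H[S]$ containing $v$ and thereby rewrite the entire Lemma-coloring of that component — in particular the value $c_v$ — so $B_v$ is nowhere near Lipschitz in the selection trials and Theorem~\ref{thm_talagrand} cannot be applied directly; freezing $S$ first makes $c_v$ deterministic. Even then the label $\sigma_{vx_0}$ that defines the target color is itself dangerous, since flipping it can change the status of all neighbors simultaneously, which is why it must be exposed before invoking Talagrand so that each remaining trial moves $B_v$ by $O(1)$. The rest is bookkeeping: accounting for the $O(1)$ error terms (the edge $vx_0$, the $S$-edges at $v$, and neighbors with $vu\in S$), checking that the constants leave enough slack for the $e^{-4}$ margin after the Talagrand deviation, and confirming that the dependency degrees in both applications of Theorem~\ref{thm_lll} are polynomial in $\Delta$, so that $ep(D+1)\le 1$ holds once $\Delta$ is large.
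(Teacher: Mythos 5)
Your proposal is correct and follows the paper's skeleton---each high-degree vertex picks one random incident edge, Lemma~\ref{lemma_uniqueEdge} is applied to the resulting graph $H[S]$ whose components are trees or unicyclic, the $9$ colors arise as a $3\times 3$ product, and Talagrand plus the Local Lemma finish the job---but you implement the second coordinate genuinely differently. In the paper the extra factor of $3$ is generated at selection time: each high vertex also draws a bit $f_1(v)\in\{0,1\}$, the chosen edge inherits it (with $2$ resolving double assignments), and a single random experiment establishes, for both $i=0$ and $i=1$ and before Lemma~\ref{lemma_uniqueEdge} is ever invoked, that at least $e^{-4}\Delta$ neighbors of $v$ meet no edge of color $i$ or $2$; the Lemma-coloring is then attached deterministically at the end. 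You instead fix $S$ first, apply Lemma~\ref{lemma_uniqueEdge}, and only then attach fresh independent labels $\sigma_e\in\{0,1,2\}$, so a neighbor $u$ is bad only if it carries an $S$-edge of the exact target color $(c_v,\sigma_{vx_0})$, giving the $k_u/3$ saving. This forces your two-stage analysis (first concentrating $\sum_{u\in N(v)}\deg_S(u)$, then $B_v$ after exposing the pivotal label $\sigma_{vx_0}$), and your diagnosis of why a one-shot Talagrand application would fail---changing one selection trial can rewrite the whole Lemma-coloring of a component, and flipping $\sigma_{vx_0}$ moves $B_v$ by $\Theta(\Delta)$---is exactly right. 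Your route buys cleaner, more transparent bookkeeping (the label randomness is independent of everything else); its cost is the extra selection-stage concentration step, where one small technicality deserves care: $\sum_{u\in N(v)}\deg_S(u)$ is not literally a binomial but $|N(v)\cap\{\text{high vertices}\}|$ plus a sum of independent, non-identically distributed indicators of total mean at most $\Delta^2/\tau$, so you need stochastic domination, a generalized Chernoff--Hoeffding bound, or a further application of Theorem~\ref{thm_talagrand} rather than Theorem~\ref{thm_chernoff} verbatim. The constants work out with ample slack ($\E(B_v\mid S)\lesssim 0.7\Delta$ against the target $(1-e^{-4})\Delta\approx 0.98\Delta$), so this is a complete alternative proof once that routine point is filled in.
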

\begin{proof}
We choose $S$ and $f$ by the following random procedure. First, for each vertex $v$ of degree at least $\frac{\Delta}{2}$ we pick an edge $e_v$ incident to $v$ and a color $f_1(v)$ that is either $0$ or $1$; all the choices are made independently and uniformly at random. Then we take $S:=\bigcup\lbrace e_v\rbrace$ and define $f_2:S\rightarrow\lbrace 0, 1, 2\rbrace$ such that for every $v$ we assign the color $f_1(v)$ to the edge $e_v$, and then assign the color $2$ to all edges that were assigned both: $0$ and $1$ 
(note that this may happen for an edge $e=uv$ only if $e=e_u=e_v$).

We will show that with positive probability the following property holds:
\begin{enumerate}
\item[(a)]\label{con_2nd} for every vertex $v$ of degree greater than $\frac{\Delta}{2}$, for $i=0,1$, at least $e^{-4}\Delta$ neighbors of $v$ are not adjacent to any edge of color $i$ or $2$.
\end{enumerate}

For a vertex $v$ of degree greater than $\frac{\Delta}{2}$, take $B_v$ to be the event that $v$ violates condition (a). 
For $i\in\lbrace 0,1\rbrace$ take $Y_v^i$ to be a random variable that counts the number of neighbors of $v$ that are incident to at least one edge from $S$ that was assigned color $i$ or $2$. 

For any vertex $u$, changing $f_1(u)$ or $e_u$ can change the value of $Y_v^i$ by at most $2$, and $Y_v^i\geq s$ can be witnessed by the values of $f_1(u)$ and $e_u$ for at most $s$ vertices $u$. Therefore, we can use Talagrand's Inequality (Theorem \ref{thm_talagrand}) for $Y_v^i$ with $k=c=2$. In order to do so, we will now estimate $\E(Y_v^i)$.

Note that $Y_v^i$ is equal to $\sum_{u\in N(v)}I^i_u$, where $I^i_u$ is a $0-1$ variable that indicates if $u$ is incident to an edge that was assigned color $i$ or $2$. Note that a sufficient condition for $I^i_u$ to be $0$ is that $f_1(u)\neq i$ and for every neighbor $w$ of $u$ we do not have both $f_1(w)=i$ and $e_w=uw$. Recall that the degree of $u$ is at most $\Delta$ and, by our random procedure, all neighbors $w$ of $u$ for which $e_w$ is defined have degree at least $\frac{\Delta}{2}$. Therefore, $I^i_u$ is $0$ with probability at least $\frac{1}{2}\left(1-\frac{1}{2}\frac{2}{\Delta}\right)^\Delta\geq e^{-2}$. It follows that $\E(I_u^i)\leq 1-e^{-2}$ and, by linearity of expected value, $\E(Y_v^i)\leq \deg(v)-e^{-2}\deg(v)\leq \deg(v)-e^{-3}\Delta$. 

Now we apply Theorem \ref{thm_talagrand} to $Y_v^i$ with $k=c=2$ and $t=e^{-4}\Delta$. Using the fact that $\E(Y_v^i)\leq \deg(v)-e^{-3}\Delta$ and more convenient estimations $\E(Y_v^i)\leq \E(Y_v^i)+t \leq \Delta$ we obtain that
$$
\Pr\left(Y_v^i > \deg(v) - e^{-3}\Delta + e^{-4}\Delta + 40\sqrt{2\Delta} + 512 \right)\leq 4e^{-\frac{e^{-8}\Delta^2}{64\Delta}}.
$$
For $\Delta$ sufficiently large, we thus have
$$
\Pr\left(Y_v^i > \deg(v) - e^{-4}\Delta\right)\leq 4e^{-\frac{e^{-8}\Delta^2}{64\Delta}}.
$$
Note that a necessary condition for $B_v$ is that either $Y_v^0>\deg(v)-e^{-4}\Delta$ or $Y_v^1>\deg(v)-e^{-4}\Delta$, so
$$
\Pr(B_v)\leq \Pr(Y_v^0>\deg(v)-e^{-4}\Delta)+\Pr(Y_v^1>\deg(v)-e^{-4}\Delta) \leq 8e^{-\frac{e^{-8}}{64}\Delta}.
$$

The event $B_v$ depends only on $f_1(u)$ and $e_u$ for all $u$ that are at distance at most $2$ from $v$, so it is mutually independent of the set of all other events $B_w$ with $w$ at distance greater than $4$ from $u$. Therefore, by Theorem \ref{thm_lll} with $p=\Delta^{-5}$ and $D=\Delta^4$ we conclude that $\Pr\left(\bigcap_{v\in V(G)}\widebar{B_v}\right)>0$, so property (a) is satisfied with positive probability.

Note that by the choice of $S$, for each subset $X\subseteq V(H)$ there are at most $\left|X\right|$ edges in $S$ with both endpoints in $X$, because each such edge must have been chosen as $e_v$ for some $v\in X$. It follows that each connected component in $H[S]$ is either a tree or contains exactly one cycle. Now take $f_3$ to be a $3$-coloring of $S$ given by Lemma \ref{lemma_uniqueEdge} applied to $H[S]$. 

We set $f$ to be a product of $f_2$ and $f_3$, i.e. $f(e)=(f_2(e),f_3(e))$, and claim that $f$ and $S$ satisfy the properties required in the lemma. 

Indeed, take $v$ to be a vertex of degree at least $\frac{\Delta}{2}$. Pick a neighbor $u$ of $v$ such that the color $c_v$ in $f_3$ of an edge $uv$ is unique among colors of edges incident with $v$. Take $i$ to be $0$ if $f_2(uv)=0$ or $1$ if $f_2(uv)\in \lbrace 1, 2 \rbrace$. Let $L_v$ be a set of all edges $vw\in E(H)$ such that $w$ is not incident to any edge colored $i$ or $2$ in $f_2$. Note that edges in $L_v$ are satisfied by $f$, because for every $vw\in L_v$, the vertex $w$ is not incident to any edge getting either color $i$ or $2$ in $f_2$. By the property (a), at least $e^{-4}\Delta$ edges incident to $v$ are in $L_v$, which proves the first part of the lemma. Since $f_3$ is a conflict-free coloring of $H[S]$, the proof of the lemma is complete.
\end{proof}

Now we are ready to prove Theorem \ref{thm_lineGraphs} -- after establishing the lemma above, all that remains is to formalize 
the way 
all the steps of the procedure are handled.

\begin{proof}[Proof of Theorem \ref{thm_lineGraphs}]
Denote by $\Delta_0$ the least integer such that Lemma \ref{lemma_lineGraphs} is true for all $\Delta\geq\Delta_0$ and $\Delta_0\geq (1-e^{-4})^{-1}$. Let $H$ be a graph with maximum degree $\Delta>\Delta_0$. We start with at most $\lfloor\log_{\frac{1}{1-e^{-4}}}\Delta\rfloor$ applications of Lemma \ref{lemma_lineGraphs}. To be more precise, we set $H_1:=H$ and then for $i=1,2,\ldots$ we define $S_i$ and $f_i$ to be the set of edges and its coloring obtained from Lemma \ref{lemma_lineGraphs} applied for the graph $H_i$, and $L_i\supseteq S_i$ to be the set of edges of $H_i$ satisfied by $f_i$; set $H_{i+1}:=H_i \setminus L_i$. Let $i_{\max}$ be the first $i$ such that $\Delta(H_i)< \Delta_0$. Let $f_{i_{\max}}$ be a proper edge-coloring of $H_{i_{\max}}$ with the least possible number of colors and set $S_{i_{\max}}:=E(H_{i_{\max}})$. Note that $L_{i_{\max}}=S_{i_{\max}}$.

Finally, we define a coloring $c$ of the edges of $H$ 
by setting  $c(e)=(f_i(e),i)$ for every $e\in E(H)$ where $i$ is the 
integer such that $e$ received a color in $f_i$;
note that such $i$ is unique, because $S_i\subseteq L_i$. Note also that for each $i$, the edges from $L_i$ are satisfied by $c$. Since each edge of $H$ belongs to $L_i$ for some $i$, $c$ is conflict-free. We have used at most $\Delta_0$ colors in step $i_{\max}$ and $9$ colors in each of the remaining steps. Hence the total number of colors that we have used is at most $9\left\lfloor\log_{\frac{1}{1-e^{-4}}}\Delta\right\rfloor+\Delta_0$. 
Since the maximum degree $\Delta'$ of the line graph $G$ of $H$ equals at least $\Delta-1$, we have thus proven that $\chi_{CF}(G) = O(\Delta')$.
\end{proof}

\section{Final remarks}
\label{section_final}


We believe that one possible direction of further research could regard generalizing our main result to 
claw-free graphs, and further to $K_{1,r}$-free graphs for $r>3$. 
Such a generalization would not be straightforward, as our proof heavily relies on a specific structure of line graphs, 
hence this will require developing some new ideas.
An especially promising subclass of $K_{1,r}$-free graphs is the class of intersection graphs of geometric objects (including, for example, unit disk graphs) -- such graphs on $n$ vertices have conflict-free chromatic number not greater than $O\left(\ln n\right)$, as proved by Keller and Smorodinsky \cite{KellerEtAl}, and improving the result to $O(\ln\Delta)$ seems plausible.


In view of Theorem~\ref{thm_regularGraphs} and the mentioned above construction of Glebov, Szabó and Tardos from~\cite{GlebovEtAl}, it would also be interesting to examine a threshold for $\delta$ (in terms of $\Delta$)
above which an $O(\ln\Delta)$ upper bound for  $\chi_{CF}(G)$ retains valid, and investigate what happens below such a threshold, as in general  $\chi_{CF}(G) \leq O(\ln^2\Delta)$.  
We believe this direction of research may provide a list of very interesting though yet unpredictable results.


\begin{thebibliography}{99}
\bibitem{LLLRef} N. Alon and J.H. Spencer, The probabilistic method 2nd, ed., Wiley, New York, 2000.

\bibitem{Hindusi} S. Bhyravarapu, S. Kalyanasundaram, R. Mathew, Conflict-free coloring on closed neighborhoods of bounded degree graphs, arXiv:2003.05637v3, 2020.

\bibitem{EvenEtAl} G. Even, Z. Lotker, D. Ron, S. Smorodinsky, Conflict-free colorings of simple geometric regions with applications to frequency assignment in cellular networks, Siam. J. Comput., 33, 94--136, 2003.

\bibitem{GlebovEtAl} R. Glebov, T. Szabó, G. Tardos, Conflict-free colouring of graphs, Combinatorics, Probability and Computing, 23(3), 434--448, 2014.

\bibitem{KellerEtAl} C. Keller, S. Smorodinsky, Conflict-Free Coloring of Intersection Graphs of Geometric Objects, Discrete Comput Geom, 2019.

\bibitem{KostochkaEtAl} A. Kostochka, M. Kumbhat and T. Łuczak, Conflict-Free Colourings of Uniform Hypergraphs With Few Edges, Combinatorics, Probability and Computing 21(4), 611--622, 2012. 

\bibitem{MolloyTalagrandRef} M. Molloy and B. Reed, Colouring graphs when the number of colours is almost the maximum degree, J. Combin. Theory Ser. B. 109, 134--195, 2014.

\bibitem{MolloyReedColoringBook} M. Molloy and B. Reed, Graph Colouring and the Probabilistic Method, Springer, Berlin, 2002.

\bibitem{PachTardos} J. Pach and G. Tardos. Conflict-free colourings of graphs and hypergraphs. Combinatorics, Probability and Computing 18(5), 819 -- 834, 2009.

\bibitem{SmorodinskyPhd} S. Smorodinsky, Combinatorial Problems in Computational Geometry. PhD thesis, School of Computer Science, Tel-Aviv University, 2003.

\bibitem{SmorodinskyApplications} S. Smorodinsky, Conflict-free coloring and its applications. In Geometry– Intuitive, Discrete, and Convex, Springer, 331-389, 2013.
\end{thebibliography}
\end{document}